\definecolor{VeryLightBlue}{rgb}{0.9,0.9,1}
\definecolor{LightBlue}{rgb}{0.8,0.8,1}
\definecolor{MidBlue}{rgb}{0.5,0.5,1}
\definecolor{DarkBlue}{rgb}{0,0,0.6}
\definecolor{Blue}{rgb}{0,0,1}
\definecolor{Gold}{rgb}{1,0.843,0}
\definecolor{LightGreen}{rgb}{0.88,1,0.88}
\definecolor{MidGreen}{rgb}{0.6,1,0.6}
\definecolor{DarkGreen}{rgb}{0,0.6,0}
\definecolor{VeryLightYellow}{rgb}{1,1,0.9}
\definecolor{LightYellow}{rgb}{1,1,0.6}
\definecolor{MidYellow}{rgb}{1,1,0.5}
\definecolor{DarkYellow}{rgb}{1,1,0.2}
\definecolor{DarkPurple}{rgb}{.6,0,1}
\definecolor{Red}{rgb}{1,0,0}
\definecolor{VeryLightRed}{rgb}{1,0.9,0.9}
\definecolor{LightRed}{rgb}{1,0.8,0.8}
\definecolor{MidRed}{rgb}{1,0.55,0.55}
\long\def\delete#1{}
\newcommand{\n}{\noindent}
\newtheorem{thm}{Theorem}[section]
\newtheorem{conj}[thm]{Conjecture}
\newtheorem{lemma}[thm]{Lemma}
\newtheorem{defn}[thm]{Definition}
\newcommand{\be}{\begin{equation}}
\newcommand{\ee}{\end{equation}}
\newcommand{\bea}{\begin{eqnarray}}
\newcommand{\eea}{\end{eqnarray}}
\newcommand{\bean}{\begin{eqnarray*}}
\newcommand{\eean}{\end{eqnarray*}}
\def\qed{\hfill$\Box$\vspace{11pt}}
\newcommand{\RRR}{\mathcal{R}}
\begin{document}

\title{Zero-sum 6-flows in 5-regular graphs}
\author{Fan Yang\thanks{Department of Mathematics and Physics, Jiangsu University of Science and Technology, Zhenjiang, Jiangsu 212003, China. Supported by the NSF-China (11326215, 11501256 and 11371009), Corresponding author email: fanyang$\_$just@163.com}\;\,,  Xiangwen Li\thanks{Department of Mathematics, Huazhong Normal
University, Wuhan 430079, China. Partially supported by NSF-China Grant (11171129) and by
Doctoral Fund of Ministry of Education of China (20130144110001). }
}

\date{26/01/2016}
\openup 0.5\jot
\maketitle

\begin{abstract}
Let $G$ be a graph. A zero-sum flow of $G$ is an assignment of non-zero real
numbers to the edges of $G$ such that the sum of the values of all edges incident with
each vertex is zero. Let $k$ be a natural number. A zero-sum $k$-flow is a flow with
values from the set $\{\pm1, \ldots, \pm(k - 1)\}$. In this paper, we prove that every 5-regular
graph admits a zero-sum 6-flow.
\end{abstract}

\section{Introduction}
\label{sec:int}

All graphs in this paper are finite and undirected without loops, possibly with parallel edges. A \emph{nowhere-zero $k$-flow} in a graph with orientation is an assignment of an integer from $\{-(k-1), \ldots, -1, 1, \ldots, (k-1)\}$ to each of its edges such that Kirchhoff's law is respected, that is, the total incoming flow is equal to the total outgoing flow at each vertex. As noted in \cite{Jaeger}, the existence of a nowhere-zero flow of a
graph $G$ is independent of the choice of the orientation. Nowhere-zero flows in graphs were introduced by Tutte\cite{Tutte} in 1949. A great deal of research in the area has
been motivated by Tutte's 5-Flow Conjecture which asserts that every 2-edge-connected
graph admits a nowhere-zero 5-flow. In 1983, Bouchet\cite{Bouchet} generalized
this concept to bidirected graphs. A {\em bidirected graph} $G$ is a graph with vertex set $V(G)$
and edge set $E(G)$ such that each edge is oriented as one of the four possibilities in Figure 1.

\begin{figure}[ht]
\centering
\includegraphics*[height=2.5cm]{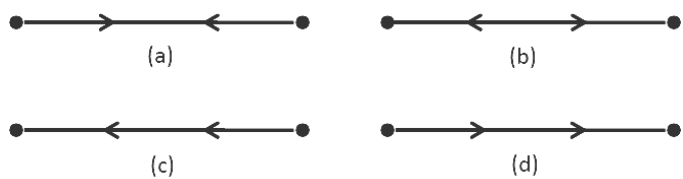}
\caption{\small Orientations of edges in bidirected graph.}
\label{fig:1}
\end{figure}

\n An edge with orientation as (a) (respectively, (b)) is called an {\em in-edge} (respectively, {\em out-edge}). An edge that is neither an in-edge nor an out-edge is called an {\em ordinary edge} as in (c) or (d). An integer-valued function $f$ on $E(G)$ is a nowhere-zero bidirected $k$-flow if for every $e\in E(G)$, $0<|f(e)|<k$, and at every vertex $v$, the value of function $f$
in equals the value of function $f$ out.
The following conjecture, posed by Bouchet \cite{Bouchet} and known as Bouchet's $6$-flow conjecture, is one of the most important problems on nowhere-zero integer flows in bidirected graphs.

\begin{conj}
\label{conj:Bou}
(\cite{Bouchet}) If a bidirected graph admits a nowhere-zero $k$-flow for some positive integer $k$, then it admits a nowhere-zero 6-flow.
\end{conj}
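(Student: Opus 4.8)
The plan is to adapt the group-flow machinery behind Seymour's nowhere-zero $6$-flow theorem for ordinary graphs to the bidirected setting, and to isolate the genuinely new difficulty that the unbalanced (sign-incoherent) structure introduces. First I would reduce to the essential case. Since we are given that $G$ admits \emph{some} nowhere-zero integer flow, I may delete components that carry no flow and suppress the cut-edges across which no value can circulate, so that after this cleanup $G$ is connected and every edge lies on a closed bidirected walk along which the flow can be routed. The strategic target would then be to imitate Seymour's proof for bridgeless undirected graphs, which rests on the factorisation $\ZZ_6\cong\ZZ_2\times\ZZ_3$: one builds the flow as a $\ZZ_2$-component (a parity or join structure) together with a $\ZZ_3$-component, and adds them coordinatewise so that the result is nowhere zero provided the two components never vanish simultaneously on a common edge. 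A first warning, absent in the undirected theory, is that for bidirected graphs group-valued flows and integer flows are \emph{not} interchangeable---signed graphs can have a nowhere-zero $\ZZ_k$-flow but no nowhere-zero integer $k$-flow and vice versa---so any passage through $\ZZ_6$ must be justified afresh with integer values kept in view.

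Second, I would split $G$ according to balance. The balanced part of $G$ can be ``switched'' to an all-positive signature, so it behaves locally like an ordinary graph; there the Seymour-type chain decomposition into pieces each carrying the required partial flow should go through, yielding a $\{\pm1,\dots,\pm5\}$-valued flow on the balanced remainder (equivalently its $\ZZ_2$- and $\ZZ_3$-parts). The unbalanced edges, and the cuts that separate them from the balanced remainder, would be quarantined into a controlled ``core'': one fixes a small, explicit flow circulating through the unbalanced cycles of the core, routes its boundary across the separating cuts, and then applies the inductive Seymour argument only to the balanced part, correcting the core by hand.

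The main obstacle---and the reason this statement is a longstanding conjecture rather than a theorem---is exactly the unbalanced structure and its interface with the rest. An unbalanced cycle admits no consistent $\pm1$ circulation modulo $2$, so the clean $\ZZ_6\cong\ZZ_2\times\ZZ_3$ decomposition collapses precisely at the edges whose incidences force an odd obstruction; there is no $\ZZ_2$-part to combine. The technical heart is therefore to make the core--remainder interface consistent: to ensure that the values forced on the separating cut-edges by the fixed core flow neither clash with the inductively produced flow on the balanced part nor create a zero at any interface vertex, while keeping all values inside $\{\pm1,\dots,\pm5\}$. Controlling this interaction uniformly is what resists a bound of $6$. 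The known unconditional arguments pay for it with larger bounds (Bouchet's original $216$ in \cite{Bouchet}, afterwards lowered to $30$, with $6$ reached only under extra edge-connectivity hypotheses), and closing the remaining gap to $6$ in full generality is the step where a genuinely new idea would be required.
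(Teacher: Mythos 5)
The statement you were asked about is Conjecture~\ref{conj:Bou}, and the paper contains no proof of it --- it is Bouchet's 6-flow conjecture, which remains open. The paper only proves a very special consequence of it: Theorem~\ref{th1}, that 5-regular graphs admit zero-sum 6-flows, where a zero-sum flow corresponds to the restricted bidirected setting in which every edge is an in-edge or an out-edge (via the equivalence in Theorem~\ref{equal}). So there is no ``paper proof'' to match your attempt against, and any submission claiming to prove the conjecture would have to contain a genuinely new idea that the literature cited here (Bouchet \cite{Bouchet}, DeVos \cite{DeVos}, Raspaud--Zhu \cite{Raspaud}) does not supply.

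Judged on its own terms, your proposal is not a proof but a strategy sketch, and by your own admission in the final paragraph it fails at exactly the decisive step: making the unbalanced core interact with the balanced remainder while keeping all values in $\{\pm 1,\ldots,\pm 5\}$. You correctly diagnose the two structural obstructions --- that group-valued and integer-valued nowhere-zero flows are not interchangeable on signed/bidirected graphs, so Seymour's passage through $\ZZ_6\cong\ZZ_2\times\ZZ_3$ cannot be imported wholesale, and that unbalanced cycles admit no consistent $\ZZ_2$-circulation, so the $\ZZ_2$-component of the decomposition has nothing to act on --- but diagnosing the obstruction is where your argument stops, not where it succeeds. Two factual corrections as well: the best unconditional bound is 12, due to DeVos \cite{DeVos} (your ``lowered to 30'' refers to Zyka's older bound), and under extra connectivity the known result is stronger than you state --- Raspaud and Zhu \cite{Raspaud} obtain a nowhere-zero \emph{4}-flow for 4-edge-connected bidirected graphs admitting an integer flow, not merely a 6-flow. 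The honest conclusion is that your text is a reasonable survey of why the conjecture is hard, but it establishes nothing beyond what is already known, and in particular it does not prove the statement.
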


In \cite{Bouchet}, Bouchet showed that the value $6$ in this conjecture is best possible. Raspaud and Zhu \cite{Raspaud} proved that, if a 4-edge-connected bidirected graph admits a nowhere-zero integer flow, then it admits a nowhere-zero 4-flow, which is best possible. Recently, DeVos \cite{DeVos} proved that the result in Bouchet's Conjecture is true if 6 is replaced by 12.

A \emph{zero-sum flow} (also called \emph{nowhere-zero unoriented flow}) in a graph is an assignment of non-zero integers to the edges of $G$ such
that the total sum of the assignments of all edges incident with any vertex of $G$ is zero. A zero-sum
$k$-flow in a graph $G$ is a zero-sum flow with flow values from the set $\{\pm 1, \ldots, \pm(k-1)\}$.
The following conjecture is known as \textbf{Zero-Sum Conjecture}.

\begin{conj}
\label{conj:ZSC}
(\cite{Akbari0}) If a graph $G$ admits a zero-sum flow, then it admits
a zero-sum 6-flow.
\end{conj}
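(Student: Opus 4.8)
The plan is to route the whole problem through the established dictionary between zero-sum flows and nowhere-zero flows on bidirected graphs, and then to cut the general statement down to a bounded family of regular graphs. First I would record the reformulation. Turn $G$ into a bidirected graph $-G$ by declaring every edge to be an in-edge at both of its endpoints (type (a) of Figure~\ref{fig:1} at each end); then the balance condition at a vertex $v$ collapses from $\sum_{e\ni v}\sigma_v(e)f(e)=0$ to $\sum_{e\ni v}f(e)=0$, which is exactly the zero-sum condition. Under this identification a zero-sum $k$-flow of $G$ is precisely a nowhere-zero bidirected $k$-flow of $-G$, so the entire conjecture lives inside Bouchet's framework, with the antibalanced signed graphs $-G$ forming the relevant subclass. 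I would also note at the outset that ``admits a zero-sum flow'' is a genuine hypothesis: odd cycles (e.g.\ a triangle) admit none, since the incidence conditions force a zero edge-value.

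Second, I would reduce the general conjecture to regular graphs. Among graphs that do admit a zero-sum flow, the idea is to attach flow-preserving gadgets at vertices of low degree so as to raise all degrees to a common value $r$ without destroying the existence of a flow and without increasing the range of values needed. The content of this step is to design small gadgets that carry a zero-sum flow internally using only values in $\{\pm1,\dots,\pm5\}$ and that equalize degrees; done correctly this shows it suffices to prove the conjecture for $r$-regular graphs, and in fact for each fixed $r$ separately. For the unconditional partial results I would then invoke the bidirected theorems in the excerpt applied to $-G$: DeVos's theorem (\cite{DeVos}) gives that every graph admitting a zero-sum flow admits a zero-sum $12$-flow, i.e.\ the conjecture with $6$ replaced by $12$; and since the signing does not change the underlying graph, the Raspaud--Zhu theorem (\cite{Raspaud}) gives that every $4$-edge-connected such $G$ admits a zero-sum $4$-flow.

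The main obstacle is closing the gap from $12$ down to $6$, and it is a real one. The Zero-Sum $6$-flow Conjecture is known to be equivalent to Bouchet's $6$-flow Conjecture (Conjecture~\ref{conj:Bou}), so no unconditional proof of the full statement is within reach by present methods, and the $12\to 6$ reduction cannot be expected in general. The realistic target is therefore the reduced regular cases, handled regularity by regularity: the even-regular cases are comparatively tractable, while the odd-regular cases are the crux. Combined with the even-regular results and the cubic case already in the literature, the decisive next step is the smallest open odd-regular case, which is exactly the statement this paper establishes, namely that every $5$-regular graph admits a zero-sum $6$-flow; through the regularization reduction above this feeds directly back into the general conjecture. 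In short, the achievable part of the program is the reduction to regular graphs together with a case-by-case attack on the regularities, while the genuinely hard part, equivalent to Bouchet's conjecture, remains open.
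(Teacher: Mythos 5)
You were asked about Conjecture~\ref{conj:ZSC}, which is an open conjecture: the paper contains no proof of it, cites it from \cite{Akbari0}, records its equivalence to Bouchet's Conjecture~\ref{conj:Bou} (Theorem~\ref{equal}), and contributes only the special case Theorem~\ref{th1}. Your proposal correctly recognizes this status rather than claiming a proof, and most of your contextual material is sound: the translation of a zero-sum $k$-flow into a nowhere-zero bidirected $k$-flow on the all-in-edge (antibalanced) orientation is exactly the identification the paper states before Theorem~\ref{equal}; the odd-cycle example showing the hypothesis is nonvacuous is right; and the applications of DeVos's theorem (zero-sum $12$-flow for any graph with a zero-sum flow) and of Raspaud--Zhu (zero-sum $4$-flow in the $4$-edge-connected case, since signing does not change edge-connectivity) are legitimate.

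However, the one substantive step you actually claim --- that flow-preserving degree-raising gadgets reduce the conjecture to $r$-regular graphs --- is a genuine gap, and it demonstrably cannot work as stated. The regular cases are already settled: even cycles admit a zero-sum $2$-flow, Theorem~\ref{regular} gives zero-sum $5$-flows for all $r$-regular graphs with $r\ge 3$, $r\neq 5$, and Theorem~\ref{th1} handles $r=5$; so every regular graph admitting a zero-sum flow admits a zero-sum $6$-flow. If your reduction were valid, the full Zero-Sum Conjecture would therefore follow, contradicting its equivalence to the open Bouchet conjecture --- so the reduction must fail, and your own closing admission that the conjecture ``remains open'' is inconsistent with the claim that the reduction ``shows it suffices to prove the conjecture for $r$-regular graphs.'' The concrete failure point is the return direction: a zero-sum $6$-flow of the regularized graph $G'$ does not restrict to one of $G$, because the gadget edges incident with an attachment vertex $v$ contribute a nonzero amount to the sum at $v$, and nothing forces that contribution to vanish; conversely one cannot in general design gadgets whose internal values in $\{\pm1,\dots,\pm5\}$ are guaranteed to sum to zero at the attachment vertex in \emph{every} zero-sum $6$-flow of $G'$. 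Since the paper offers no proof of Conjecture~\ref{conj:ZSC} either, there is no approach to compare against; the accurate assessment is that your proposal is a mostly correct survey of known partial results, marred by one reduction claim that is provably unavailable.
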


Indeed a
zero-sum $k$-flow in $G$ is exactly a nowhere-zero $k$-flow in the bidirected graph with underlying graph $G$ such that each edge is an in-edge or out-edge. The following theorem shows the equivalence between Bouchet's Conjecture and Zero-Sum Conjecture.

\begin{thm}
\label{equal}
(\cite{Akbari1})
Bouchet's Conjecture and Zero-Sum Conjecture are equivalent.
\end{thm}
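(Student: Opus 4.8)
The plan is to exhibit a value-preserving correspondence between zero-sum flows on ordinary graphs and nowhere-zero flows on bidirected graphs all of whose edges are in-edges or out-edges, and then to reduce the case of an arbitrary bidirected graph to this special case. The starting point is the observation already recorded above: a zero-sum $k$-flow on a graph $G$ is precisely a nowhere-zero $k$-flow on the bidirected graph $G^{+}$ obtained from $G$ by declaring every edge to be an in-edge. More generally, if $H$ is a bidirected graph in which every edge is an in-edge or an out-edge, then at each vertex the conservation law reads $\sum_{e \ni v}\sigma_e f(e)=0$, where $\sigma_e=+1$ for an in-edge and $\sigma_e=-1$ for an out-edge (the sign being the same at both endpoints). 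Setting $f'(e)=\sigma_e f(e)$ turns this into $\sum_{e\ni v}f'(e)=0$ with $|f'(e)|=|f(e)|$, so nowhere-zero $k$-flows on such an $H$ correspond exactly, and with the same range of values, to zero-sum $k$-flows on the underlying graph.

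For the implication ``Bouchet's Conjecture $\Rightarrow$ Zero-Sum Conjecture'' this correspondence is already enough. Given a graph $G$ that admits a zero-sum flow, the bidirected graph $G^{+}$ admits a nowhere-zero integer flow; Bouchet's Conjecture then supplies a nowhere-zero $6$-flow on $G^{+}$, which by the correspondence is a zero-sum $6$-flow on $G$.

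The reverse implication is where the work lies, because a general bidirected graph $H$ also contains ordinary edges, and an ordinary edge contributes $+f(e)$ at one endpoint and $-f(e)$ at the other, so it does not fit the zero-sum pattern. I would remove ordinary edges by a subdivision gadget: replace each ordinary edge $uv$ by a path $u\,w\,v$ through a new degree-two vertex $w$, making $uw$ an out-edge and $wv$ an in-edge. Conservation at $w$ forces the two new edges to carry equal values, and the net contributions at $u$ and $v$ reproduce exactly those of the original ordinary edge; conversely, any nowhere-zero flow on the subdivided graph descends to one on $H$. Crucially, this operation changes neither the set of values used nor whether the flow is nowhere-zero, so $H$ admits a nowhere-zero $k$-flow if and only if the subdivided graph $H'$ does. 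Since $H'$ has only in-edges and out-edges, the correspondence of the first paragraph identifies its flows with the zero-sum flows of its underlying graph $G$.

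With these pieces the reverse direction follows: if $H$ admits a nowhere-zero $k$-flow, then $G$ admits a zero-sum $k$-flow, hence in particular a zero-sum flow; the Zero-Sum Conjecture then yields a zero-sum $6$-flow on $G$, which transports back through $H'$ to a nowhere-zero $6$-flow on $H$. I expect the only delicate points to be bookkeeping: checking that the subdivision gadget faithfully transfers flows in both directions while preserving the bound $|f(e)|<k$, and verifying that negating the value on an out-edge never creates a zero value. Neither is hard, so the heart of the argument is simply the choice of the subdivision gadget that eliminates ordinary edges.
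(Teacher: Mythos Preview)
The paper does not give its own proof of this theorem: it is stated as a result of Akbari, Daemi, Hatami, Javanmard and Mehrabian \cite{Akbari1} and cited without argument. So there is no proof in the paper to compare your proposal against.

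That said, your sketch is correct and is essentially the standard argument. The identification of zero-sum $k$-flows on $G$ with nowhere-zero $k$-flows on the all-in bidirection $G^{+}$ is exactly the observation the paper records just before the theorem, and it immediately gives the implication Bouchet $\Rightarrow$ Zero-Sum. For the converse, your subdivision gadget---replacing each ordinary edge $uv$ by a $2$-path $uwv$ with $uw$ an out-edge and $wv$ an in-edge---does exactly what is needed: conservation at $w$ forces the two new edges to carry the same value, the contributions at $u$ and $v$ are unchanged, and the edge values (hence the bound $|f(e)|<k$ and the nowhere-zero property) are preserved in both directions. The resulting bidirected graph has only in- and out-edges, so its nowhere-zero $k$-flows correspond to zero-sum $k$-flows on its underlying graph, and the Zero-Sum Conjecture transports back to a nowhere-zero $6$-flow on the original $H$. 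There are no gaps in this outline; it is the argument one finds in \cite{Akbari1}.
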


There are many results about Zero-Sum Conjecture recently. S. Akbari, A. Daemi, et al. \cite{Akbari2} proved that Zero-Sum Conjecture is true for hamiltonian graphs, with 6 replaced by 12.
The following conjecture is a stronger version of Zero-Sum Conjecture for regular graphs.

\begin{conj}
\label{regular conj}
(\cite{Akbari1})
Every $r$-regular graph ($r\ge 3$) admits a zero-sum 5-flow.
\end{conj}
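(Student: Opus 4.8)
The plan is to separate the cases $r$ even and $r$ odd, after reducing to connected $G$ (a zero-sum $5$-flow of $G$ is the union of zero-sum $5$-flows of its components). For even $r = 2k$ the graph is Eulerian, and the key observation is that a zero-sum $2$-flow — a $\{\pm 1\}$-valued assignment with every vertex-sum zero — is exactly a balanced edge $2$-colouring, equivalently a $k$-factor of $G$. When $k$ is even such a factor always exists: by Petersen's theorem $G$ decomposes into $k$ two-factors, and the union of any $k/2$ of them is a $k$-factor, so $\{\pm 1\}$ already suffices. The only even-degree obstruction is the parity subcase in which $k$ and $n = |V(G)|$ are both odd (for instance $K_7$), where $|E(G)| = kn$ is odd, no $k$-factor can exist, and the $\pm 1$-alternation along an Eulerian circuit fails the zero-sum condition at exactly one vertex $v_0$. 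There I would repair the single defect by superposing a zero-sum $3$-flow supported near $v_0$, keeping all values in $\{\pm 1, \pm 2\}$, so the even case stays well below the bound $5$.

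For odd $r = 2k+1$ the order $n$ is necessarily even. The natural reduction is to choose a perfect matching $M$ — which exists for bridgeless cubic graphs by Petersen's theorem, and more generally under mild edge-connectivity — and to work on the even-regular graph $H = G - M$. The crucial flexibility is that one need \emph{not} put a zero-sum flow on $H$: if $f_H$ is any bounded assignment on $H$ with vertex-sum $s(\cdot)$, then at a matched pair $uv \in M$ the only requirement is $s(u) = s(v) = -f_M(uv) \ne 0$, after which the matching value is forced. For $r = 3$ and $r = 5$ this is immediate — taking $f_H \equiv +1$ gives $s \equiv 2k$ and $f_M \equiv -2k$, a zero-sum $5$-flow using only $\{+1, -2k\}$ with $-2k \in \{-2,-4\}$. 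For larger odd $r$ the constant $2k$ leaves the admissible range, so one must instead realise a uniform (or $M$-constant) vertex-sum of small absolute value on $H$, an odd-factor / $T$-join type problem. A matching-free alternative is induction: when $G$ ($r$ odd) has a perfect matching, $G - M$ is even-regular and hence $2$-factorable, so $G$ contains a $2$-factor $F$, and $G - F$ is $(r-2)$-regular, to which induction applies with base case $r=3$; it then remains to extend the flow across the cycles of $F$.

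Both routes collapse to the same difficulty: a $2$-factor, or the residual leftover factor in the obstructed even subcase, may contain odd cycles, and an odd cycle admits no zero-sum flow in isolation, so its two edges at each vertex cannot be balanced locally. I expect the main obstacle to be exactly the coordination of these odd cycles with the surrounding structure — routing each odd cycle's ``parity debt'' into neighbouring edges without driving any value to $0$ and without exceeding $4$ in absolute value. This is precisely where the bound tightens from the readily achievable $6$ (as in the present $5$-regular result) to the conjectured $5$, and where, for general odd $r$ and for graphs with bridges or low edge-connectivity (for which even a perfect matching, and indeed the existence of any zero-sum flow, is delicate), the argument genuinely resists; a uniform treatment of this case is, I believe, the crux of the conjecture.
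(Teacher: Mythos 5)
You were asked to prove a statement that the paper itself does not prove: Conjecture~\ref{regular conj} is stated as an open conjecture of Akbari et al., known to hold for every $r\ge 3$ with $r\neq 5$ (Theorem~\ref{regular}), and the paper's entire contribution (Theorem~\ref{th1}) is the strictly weaker assertion that $5$-regular graphs admit a zero-sum \emph{6}-flow. So no complete argument exists to compare against, and yours, as you candidly admit, is not one either; the honest question is whether your partial reductions are sound. Your even case is essentially the published route to the known zero-sum $3$-flow for even $r\ge 4$, but the one subcase where it is not immediate ($k$ and $n$ both odd) is dismissed with an unspecified ``repair by a zero-sum 3-flow supported near $v_0$''; that repair is exactly where the real work sits, since a local correction must itself have zero vertex-sums everywhere and avoid creating zero values on edges already labelled $\pm 1$, and you give no construction.

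In the odd case there are two concrete gaps, one of which you half-acknowledge. First, perfect matchings simply need not exist: Petersen's theorem requires bridgeless graphs already for $r=3$, and a $5$-regular graph can violate Tutte's condition (a vertex-cut leaving several odd components). This is precisely why the paper avoids matchings altogether and instead invokes Kano's theorem (Lemma~\ref{factor}), which guarantees a $[2,3]$-factor with regular components in \emph{every} $5$-regular graph, no connectivity hypothesis. Second, the ``coordination of odd cycles'' that you correctly identify as the crux is not an afterthought to be routed locally: the paper needs the full $T$-join machinery (Lemma~\ref{T-join}) to pair up the odd cycles globally, plus the inductive cycle-cubic-tree construction (Lemma~\ref{c-c tree}) to propagate consistent edge values along the connecting trees --- and even then the values $\pm 5$ are used, which is why the paper lands on $6$ rather than $5$. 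On the credit side, your observation that a $5$-regular graph \emph{with} a perfect matching has a zero-sum $5$-flow (label $M$ by $-4$ and the rest by $+1$) is correct and parallels the paper's own easy first case (a $2$-factor yields the labelling $3/-2$, a zero-sum $4$-flow); but the graphs on which Conjecture~\ref{regular conj} remains open are exactly those lacking such a factor, so your proposal stops where the difficulty begins.
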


Motivated by Conjecture \ref{regular conj}. Akbari et al. \cite{Akbari1} proved that every $r$-regular graph ($r\ge 3$) admits a zero-sum 7-flow.
Moreover, Akbari et al. showed that Conjecture \ref{regular conj} is true except for 5-regular graph.

\begin{thm}
\label{regular}
(\cite{Akbari})
Every $r$-regular
graph, where $r\ge 3$ and $r\neq 5$, admits a zero-sum 5-flow.
\end{thm}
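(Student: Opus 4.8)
The plan is to treat the two parity regimes of $r$ separately, but to use in both cases the same device: decompose (each component of) $G$ into $2$-factors and place a single constant value on all edges of each factor. The point of a constant value is that a $2$-factor meets every vertex in exactly two edges, so its contribution to each vertex sum is twice that constant, regardless of whether the cycles of the factor are even or odd. This sidesteps the usual nuisance that an odd cycle on its own carries no zero-sum $2$-flow.

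First, suppose $r=2k$ is even, so $k\ge 2$. By Petersen's $2$-factorization theorem each component of $G$ splits into $k$ edge-disjoint $2$-factors $F_1,\dots,F_k$. I would give every edge of $F_j$ the value $c_j$, so that the sum at each vertex equals $2\sum_{j=1}^{k}c_j$. It then suffices to pick nonzero integers $c_j$ with $|c_j|\le 2$ summing to $0$: take half of them $+1$ and half $-1$ when $k$ is even, and when $k$ is odd replace one canceling pair by the triple $1,1,-2$. This produces a zero-sum $3$-flow, which is in particular a zero-sum $5$-flow.

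Next, suppose $r=2s+1$ is odd and $G$ happens to have a perfect matching $M$. Then $G-M$ is $2s$-regular, hence decomposes into $2$-factors $F_1,\dots,F_s$. I would weight $M$ by a constant $a$ and each $F_j$ by a constant $c_j\in\{\pm1\}$, so that each vertex sum is $a+2\sum_{j=1}^{s}c_j$. Choosing the $c_j$ so that $\sum_j c_j=1$ (when $s$ is odd) or $\sum_j c_j=2$ (when $s$ is even), and setting $a=-2\sum_j c_j\in\{-2,-4\}$, makes every vertex sum vanish with all values lying in $\{\pm1,\pm2,\pm4\}$. Crucially, this works verbatim for $r=5$ as well, so the exclusion of $r=5$ cannot originate here.

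The real obstacle, and the only place the hypothesis $r\neq 5$ should enter, is odd-regular graphs with no perfect matching. By the Tutte--Berge formula such a $G$ has a set $S$ with $o(G-S)>|S|$; since $r$ is odd, each odd component of $G-S$ sends an odd (hence nonzero) number of edges to $S$, so the deficiency localizes as small odd edge cuts, and as bridges in the cubic case. My plan is to induct on these cuts: split $G$ along such a cut, note that summing the vertex conditions over one side forces the total flow across the cut to be even, solve each resulting near-regular piece (regular except at the few cut vertices) recursively, and splice the partial flows together. The crux is keeping these forced cut-values inside $\{\pm1,\dots,\pm4\}$; the parity and magnitude bookkeeping leaves enough room precisely when $r\in\{3,7,9,11,\dots\}$ but becomes obstructed for $r=5$, which is exactly the phenomenon this paper resolves by allowing a $6$-flow. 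I expect this bookkeeping across cuts to be the main difficulty of the argument.
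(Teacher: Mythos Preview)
First, a framing point: Theorem~\ref{regular} is quoted from \cite{Akbari} and is not proved in the present paper, so there is no in-paper argument to benchmark against. What can be assessed is whether your proposal constitutes a proof on its own.

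Your treatment of the even case and of the odd case \emph{with} a perfect matching is correct and standard: constant weights on $2$-factors (and on the matching) give the desired flow, and you rightly observe that this part does not distinguish $r=5$ from the other odd values.

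The gap is the third case, odd $r$ with no perfect matching, which is the whole difficulty of the theorem. What you have written there is not a proof but a plan: locate a Tutte set, split along the resulting odd cuts, ``solve each near-regular piece recursively,'' and ``splice.'' None of these steps is made precise. After splitting, the pieces are no longer $r$-regular, so your induction hypothesis (stated only for regular graphs) does not apply; you would need to formulate and prove a strengthened statement for graphs with a bounded number of defect vertices, including exact control over the partial sums at those vertices so that splicing is possible. You also give no mechanism that forces the cut values to lie in $\{\pm1,\dots,\pm4\}$. Most importantly, the assertion that ``the parity and magnitude bookkeeping leaves enough room precisely when $r\in\{3,7,9,11,\dots\}$ but becomes obstructed for $r=5$'' is stated without any argument; nothing in your cut-splitting picture singles out $r=5$. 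As written, this is where the proof stops being a proof. The published argument in \cite{Akbari} does not proceed via Tutte sets and cut-splicing; it relies on structured factorizations (in the spirit of Kano's $[k-1,k]$-factor theorem, Lemma~\ref{factor} here) to obtain regular pieces on which constant or near-constant labelings can be combined, and the failure at $r=5$ arises from the numerics of those factorizations rather than from any cut bookkeeping.
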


In this paper we prove the existence of zero-sum 6-flow in 5-regular graph.

\begin{thm}
\label{th1}
Every 5-regular graph admits a zero-sum 6-flow.
\end{thm}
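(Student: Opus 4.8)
The plan is to split on whether $G$ has a perfect matching, since the matchable case is essentially immediate. First I would reduce to the case that $G$ is connected (a zero-sum $6$-flow of $G$ is the disjoint union of such flows on its components) and record that $5|V(G)| = 2|E(G)|$ is even, so $|V(G)|$ is even and a perfect matching is not excluded on parity grounds. Suppose $G$ has a perfect matching $M$. Then I would simply set $f(e) = -4$ for every $e \in M$ and $f(e) = 1$ for every $e \in E(G) \setminus M$: each vertex is incident with exactly one edge of $M$ and four edges outside $M$, so its incident values sum to $4\cdot 1 + (-4) = 0$, and all values lie in $\{\pm 1,\dots,\pm 5\}$. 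Thus the theorem holds whenever $G$ is matchable. Since every regular bipartite (multi)graph has a perfect matching (Hall/K\"onig), this already disposes of all bipartite $G$; in particular a non-matchable $5$-regular graph is necessarily non-bipartite, so odd cycles are available --- a fact I expect to be essential below.

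It remains to treat $G$ with no perfect matching. Here I would start from the same base assignment relative to a \emph{maximum} matching $M$ (value $1$ off $M$, value $-4$ on $M$). Every $M$-saturated vertex then already sums to $0$, and the only defects occur at the set $U$ of $M$-exposed vertices, where the incident sum equals $+5$. By Tutte--Berge the number $|U|$ is positive and even, and by the Gallai--Edmonds structure theorem the exposed vertices lie in the factor-critical components of $G - A$, where $A = A(G)$ is the canonical barrier. The task is thus purely corrective: modify $f$ so as to decrease the sum at each vertex of $U$ by $5$ while leaving every other vertex balanced and keeping all values in $\{\pm 1,\dots,\pm 5\}$.

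For the corrections I would use two devices. Alternating a value $\pm\delta$ along an odd-length path joining two exposed vertices changes only its two endpoints, by $\delta$ each; alternating $\pm\delta$ around an odd cycle through a single vertex $v$ changes only the sum at $v$, and by the even amount $2\delta$. The global identity $\sum_v(\text{change at }v) = 2\sum_e(\text{change in }f(e))$ forces every single-vertex correction to be even, which is exactly why the odd defects at $U$ must be cancelled in pairs; pairing the exposed vertices (possible as $|U|$ is even) and routing an odd change along a path between each pair removes two $+5$'s at once, while odd cycles let me absorb the unavoidable even residue. Organizing this inside the factor-critical components --- where a near-perfect matching missing any prescribed vertex exists and which contain odd cycles --- together with the $A$-to-component edges, is how I would realize the pairings.

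The main obstacle is the tightness of the range $\{\pm 1,\dots,\pm 5\}$: a single path carrying the whole correction $\delta = \pm 5$ pushes the base values $1$ and $-4$ outside the allowed set, so the correction must be distributed over several paths and odd cycles, with values chosen so that every edge stays nonzero and bounded by $5$ in absolute value. Making these choices mutually consistent --- so that each edge between $A$ and a factor-critical component receives a single value satisfying both endpoints, and every high-degree barrier vertex of $A$ still sums to zero --- is a constrained integer-feasibility problem on the reduced incidence between $A$ and the components. I expect this bookkeeping, handled by induction on the number of components together with a finite case analysis of the local configurations at $A$, to be the technical heart of the argument.
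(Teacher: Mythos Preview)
Your perfect-matching case is correct and clean: with $M$ a perfect matching, the assignment $f|_M=-4$, $f|_{E\setminus M}=1$ is indeed a zero-sum $5$-flow. (The paper instead splits on the existence of a $2$-factor, assigning $3$ on a $2$-factor and $-2$ on its complementary $3$-factor.)

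The gap is in the non-matchable case. What you have written there is a plan, not a proof, and the plan runs into a genuine obstruction that you identify but do not overcome. The defect at each exposed vertex is $+5$; any alternating-path correction with $|\delta|\ge 2$ that hits a matching edge (base value $-4$) on its ``minus'' step sends that edge to $-4-\delta\le -6$, while $\delta=\pm 1$ on a non-matching edge (base value $1$) produces $0$. Thus every correcting walk must be routed with a prescribed sign on every edge relative to $M$, and once several such walks share edges --- as they must, since all corrections between factor-critical components funnel through the barrier $A$ --- these sign constraints interact globally. Your proposed ``induction on components plus finite case analysis at $A$'' is not a mechanism for resolving this: the local configurations at a barrier vertex are not finite in any useful sense, and you give no invariant that survives the induction. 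As written, the argument stops exactly where the difficulty begins.

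The paper avoids this altogether via a different decomposition. Rather than a maximum matching, it invokes Kano's theorem that every $5$-regular graph has a $[2,3]$-factor each of whose components is regular --- that is, a spanning subgraph whose components are cycles and cubic graphs. One then aims to assign values in $\{4,5\}$ on the cycle components, $\{1,2,3\}$ on the cubic components, and $\{-2,-3,-4\}$ on the remaining edges. The only obstruction is the presence of odd cycles among the $2$-regular components; these are handled by a dedicated structural lemma (the ``cycle-cubic tree'' lemma), proved by induction together with a $T$-join argument in the contracted graph, which prescribes how to thread the values through a tree of such components. The point is that Kano's factor already has the right local structure, so the construction is direct and no after-the-fact correction step --- with its attendant range problems --- is required.
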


We will prove this result in Section 3. In Section 2, we introduce a new graph, called cycle-cubic tree, and give a proposition of cycle-cubic tree, which is useful to construct a zero-sum 6-flow in 5-regular graph.

\section{Preliminaries}
\label{sec:pre}

A \emph{cycle} in this paper is meant a connected 2-regular graph. Moreover, an even (or odd) cycle is a cycle with length even (or odd). Two parallel edges form a cycle of length two.
A \emph{cubic} graph is a connected 3-regular graph. Let $P$ be a path and $x, y\in V(P)$, denote by $P[x, y]$ the subpath of $P$ between $x$ and $y$, and $|P|$ the length of $P$.
In this paper, a vertex $v$ can be treated as a trivial path with length $0$.
Let $P_1$ and $P_2$ are two vertex-disjoint paths of $G$, \emph{a path connecting $P_1$ and $P_2$} is a path with all internal vertices outside $P_1\cup P_2$ between one vertex of $P_1$ and one vertex of $P_2$.
A \emph{$k$-factor} of a graph is a $k$-regular spanning subgraph.
In particular, a 2-factor is a disjoint union of cycles that cover all vertices of the graph.
For integers $a$ and $b$, $1\le a\le b$, an \emph{$[a, b]$-factor} of
$G$ is defined to be a spanning subgraph $F$ of $G$ such that $a\le d_F(v)\le b$ for every $v\in V(G)$, where $d_F(v)$ is the degree of $v$ in $F$. For any
vertex $v\in V(G)$, let $N_G(v)=\{u\in V(G)| uv\in E(G) \}$.

\begin{lemma}
\label{factor}
(\cite{Kano})
Let $r\ge 3$ be an odd integer and $k$ an integer such that $1\le k \le 2r/3$.
Then every $r$-regular graph has a $[k-1, k]$-factor each component of which is regular.
\end{lemma}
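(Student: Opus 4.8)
Since the statement is a theorem of Kano (\cite{Kano}), I will sketch a self-contained route rather than reproduce the original argument. The first thing I would record is a reformulation of the condition ``each component regular.'' If $F$ is a spanning subgraph with $d_F(v)\in\{k-1,k\}$ for all $v$, then every component of $F$ is regular precisely when no edge of $F$ joins a vertex of $F$-degree $k-1$ to one of $F$-degree $k$; indeed, within a connected subgraph the degree is constant along paths, so component-regularity is exactly the absence of such ``crossing'' edges. Hence the target is equivalent to producing a partition $V(G)=A\sqcup B$ (either part allowed to be empty) such that $G[A]$ contains a $(k-1)$-factor and $G[B]$ contains a $k$-factor: the $(k-1)$-factor supplies the degree-$(k-1)$ components and the $k$-factor the degree-$k$ ones. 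The whole point of allowing the two admissible degrees is to absorb the obstructions (such as the absence of a perfect matching) that keep an odd-regular graph from having an exact $k$-factor. So the plan is to manufacture such a partition.

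For the construction I would begin from a proper edge-colouring of $G$ with the palette $\{1,\dots,r+1\}$, which exists by Vizing's theorem; taking it equitable if needed helps spread out the deficient vertices. Because $G$ is $r$-regular, at every vertex the $r$ incident edges use $r$ distinct colours, so each vertex \emph{misses exactly one} colour. Unioning any $k$ of the colour classes then yields a spanning subgraph $F_0$ with $d_{F_0}(v)=k-1$ when $v$'s missing colour is among the $k$ chosen and $d_{F_0}(v)=k$ otherwise. Thus $F_0$ is already a $[k-1,k]$-factor, with completely explicit control over which vertices are deficient. What $F_0$ need not satisfy is the non-crossing property above: a single component may contain both deficient (degree $k-1$) and saturated (degree $k$) vertices.

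The heart of the proof, and the step I expect to be hardest, is upgrading $F_0$ to a factor with regular components by destroying these ``mixed'' components. Since $r$ is odd, $|V(G)|$ is even; writing $t$ for the number of deficient vertices, $\sum_v d_{F_0}(v)=kn-t$ is even and $kn$ is even, so $t$ is even and deficiencies can in principle be matched in pairs. I would try to cancel a pair $u,v$ of deficient vertices by an alternating trail from $u$ to $v$ of odd length whose first and last edges lie outside $F_0$: flipping edge-membership along such a trail raises the $F$-degrees of both $u$ and $v$ by one while leaving every internal degree unchanged, so both deficiencies vanish and the degree bounds are preserved. When no such trail exists, the vertices reachable from $u$ by alternating trails are confined to a region that must itself close up into a uniformly degree-$(k-1)$ component, which I peel off into the part $A$ and handle the remainder recursively. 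The inequality $k\le 2r/3$, equivalently $k\le 2(r-k)$, is exactly the slack that makes both alternatives go through: each vertex meets at least $r-k\ge r/3$ edges outside $F_0$, so the alternating trails always have room to be built and the peeled regions are genuinely regular. Proving that this process terminates with a legitimate separation $A\sqcup B$ is the crux, and it is the place where both the bound $k\le 2r/3$ and the parity consequence of $r$ being odd are essential.
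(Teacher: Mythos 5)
The paper itself offers no proof of this lemma---it is quoted verbatim from Kano \cite{Kano}---so your proposal has to stand on its own, and as written it does not. Your preliminary reductions are sound: with only two admissible degrees, component-regularity is indeed equivalent to the absence of edges joining a degree-$(k-1)$ vertex to a degree-$k$ vertex, hence to a partition $V(G)=A\sqcup B$ with a $(k-1)$-factor in $G[A]$ and a $k$-factor in $G[B]$. But the first concrete defect is already in the construction of $F_0$: this paper explicitly allows parallel edges (the lemma is applied in Section 3 to $5$-regular multigraphs, and two parallel edges are even treated as a $2$-cycle), and Vizing's bound $\chi'(G)\le r+1$ fails for multigraphs---Shannon's bound $\chi'\le \lfloor 3r/2\rfloor$ is tight there---so the proper $(r+1)$-edge-colouring you start from need not exist in the generality the lemma is used in. (A $[k-1,k]$-factor can be obtained by other means, but not by this step as stated.)

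The more serious gap is that the entire ``heart of the proof'' is asserted rather than proved. The dichotomy you posit---either an odd alternating trail between two deficient vertices whose end edges lie outside $F_0$, or else the set of alternating-reachable vertices ``closes up into a uniformly degree-$(k-1)$ component''---is exactly where all the difficulty of Kano's theorem lives, and the second alternative is unjustified as stated: the reachable set can perfectly well contain saturated (degree-$k$) vertices and need not induce anything regular; ruling this out is the substance of Tutte/Lov\'asz-type $f$-factor arguments. Note also that cancelling \emph{all} deficiencies would produce a $k$-factor, which need not exist (that is why the lemma allows two degrees at all), so the peeling branch cannot be avoided---yet it is the unproven one. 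Your recursion is moreover ill-founded: after peeling $A'$, the graph $G-A'$ is no longer $r$-regular, so the lemma's hypothesis fails and you cannot invoke the same statement on the remainder; you would need to formulate and prove a stronger statement under a degree or deficiency condition (which is in effect what Kano does, via $[a,b]$-factor existence criteria). Finally, the inequality $k\le 2r/3$, equivalently $k\le 2(r-k)$, is invoked only rhetorically (``exactly the slack''); no step of your sketch uses it in a checkable deduction, although the lemma is false without it, so any genuine proof must deploy it concretely. Your own closing admission that termination and separation are ``the crux'' is accurate: what you have is a plausible programme, not a proof.
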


\delete{
\begin{lemma}
\label{factor sum}
(\cite{Akbari})
Let G be an $r$-regular graph. Then for every even integer $q$, $2r\le q\le 4r$, there
exists a function $f: E(G)\rightarrow \{2, 3, 4\}$ such that for every $u\in V(G)$,
$\Sigma_{v\in N_G(u)}f(uv)=q$.
\end{lemma}
}

\delete{The following result can be easily verified.

\begin{lemma}
\label{even}
(\cite{Akbari0})
Let $r$ be an even integer with $r\ge 4$. Then every $r$-regular graph has a
zero-sum 3-flow.
\end{lemma}

\begin{lemma}
\label{r}
(\cite{Akbari1})
Let $G$ be an $r$-regular graph. If $r$ is divisible by 3, then $G$ has a zero-sum
5-flow.
\end{lemma}
}

Next we give a basic lemma of connected graph which will be used in the proof of Lemma \ref{c-c tree}.

\begin{lemma}
\label{m path}
Let $G$ be a connected graph. Then for any even number of distinct vertices of $G$, say $u_1, u_2, \ldots, u_{2m}$ ($m\ge 1$),
there exist $m$ edge-disjoint paths in $G$ such that the $2m$ end-vertices of these $m$ paths are pairwise distinct and form the set $\{u_1, u_2, \ldots, u_{2m}\}$.
\end{lemma}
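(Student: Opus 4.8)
The plan is to prove this by induction on $m$, using connectivity of $G$ to extract one path at a time while maintaining a spanning connected structure. First I would reduce to the case where $G$ is a tree: since $G$ is connected, it contains a spanning tree $T$, and any collection of edge-disjoint paths in $T$ is automatically a collection of edge-disjoint paths in $G$ (edge-disjointness in $T$ is only a stronger requirement). The vertices $u_1, \ldots, u_{2m}$ all lie in $T$, so it suffices to prove the statement for trees, where the extra rigidity makes the path structure transparent.

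For the tree $T$, here is the inductive construction. The base case $m=1$ is immediate: a tree has a unique path between $u_1$ and $u_2$. For the inductive step with vertices $u_1, \ldots, u_{2m}$, I would single out two of them whose connecting path in $T$ can be removed without disconnecting the remaining vertices from each other. The cleanest way to do this is to pick a leaf of $T$ among the designated vertices, or more carefully, to pick two designated vertices $u_i, u_j$ such that the unique $u_i$--$u_j$ path $P$ in $T$ has the property that deleting the edges of $P$ leaves the remaining $2m-2$ designated vertices all in components that are still usable. One reliable choice: take a longest path in $T$; its two endpoints are leaves. If both leaves are designated vertices, pair them as a path and recurse on the smaller tree (or forest). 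If a leaf is not designated, one can prune it. The delicacy is ensuring the paths we peel off remain pairwise \emph{edge-disjoint} while the induction proceeds.

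A more robust approach that I would actually carry out is the following pairing-and-XOR argument. For each designated vertex $u_k$, fix an arbitrary orientation and consider, in the tree $T$ rooted anywhere, the symmetric-difference characterization: a set of edge-disjoint paths in a tree pairing up a vertex set $\{u_1, \ldots, u_{2m}\}$ exists if and only if we can assign the pairs so that the edge-sets of the connecting paths are pairwise disjoint. In a tree, the path between $u_i$ and $u_j$ uses edge $e$ precisely when $u_i$ and $u_j$ lie in different components of $T - e$. Thus for a fixed edge $e$, splitting $T$ into two sides, the number of designated vertices on each side determines how many chosen paths must cross $e$: a pair $\{u_i,u_j\}$ crosses $e$ iff its two members are on opposite sides. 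To keep all paths edge-disjoint, each edge may be crossed by at most one path, i.e.\ at most one pair may straddle any given edge. I would show such a pairing exists by pairing the designated vertices greedily so that, across every edge $e$ of $T$, the pairing crosses $e$ at most once; this is achievable because we may pair the vertices consistently with a consecutive ordering, for instance the order in which they appear along an Euler-tour / DFS traversal of $T$, pairing the $1$st with the $2$nd, the $3$rd with the $4$th, and so on. Under this consecutive pairing, each tree edge is crossed by at most one pair, giving the required edge-disjointness.

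The main obstacle is verifying the edge-disjointness claim for the consecutive DFS pairing, i.e.\ that pairing the designated vertices as consecutive pairs in a depth-first traversal order makes the connecting paths pairwise edge-disjoint. The key observation is that in DFS order the set of descendants of any subtree forms a contiguous block, so for any edge $e$ separating a subtree from the rest, the designated vertices inside that subtree occupy a contiguous interval in the traversal order; consecutive pairing then crosses the boundary of such an interval at most once (only the boundary pair, if the interval has odd size at its endpoints, straddles $e$), yielding at most one crossing path per edge. Making this contiguity-and-parity bookkeeping precise is the technical heart; once it is established, the $m$ paths are edge-disjoint, their endpoints are exactly $\{u_1, \ldots, u_{2m}\}$ and pairwise distinct by construction, and transporting them back into $G$ via the spanning tree $T$ completes the proof.
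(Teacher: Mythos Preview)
Your reduction to a spanning tree is a nice simplification, and the crossing-count viewpoint---an edge $e$ of $T$ lies on the $u_i$--$u_j$ path iff $u_i$ and $u_j$ are on opposite sides of $T-e$---is exactly the right lens. But the step you flag as the technical heart is actually false: consecutive pairing in DFS order does \emph{not} guarantee that each tree edge is crossed at most once. Take $T$ to be the tree with root $r$, children $c_1,c_2,c_3$, and two further children $d_1,d_2$ attached to $c_2$; designate $\{c_1,d_1,d_2,c_3\}$. A DFS from $r$ (visiting children left to right) meets the designated vertices in the order $c_1,d_1,d_2,c_3$, so your pairing is $(c_1,d_1),\,(d_2,c_3)$, and both of these tree-paths use the edge $rc_2$. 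In your own notation the subtree below $rc_2$ contributes the contiguous block $[a,b]=[2,3]$; with $a$ even and $b$ odd, \emph{both} boundary pairs straddle, not just one. The pairing that actually works here is $(c_1,c_3),\,(d_1,d_2)$.

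Your tree framework is easily repaired: drop the DFS pairing and instead let $J\subseteq E(T)$ be the set of edges $e$ for which each side of $T-e$ contains an odd number of designated vertices. A one-line parity check shows that in the forest $(V(T),J)$ the vertices of odd degree are exactly $u_1,\ldots,u_{2m}$; since a forest has no closed trails, its edge set decomposes into $m$ edge-disjoint paths whose $2m$ endpoints are precisely the designated vertices. For comparison, the paper never passes to a spanning tree: it runs the induction on $m$ directly in $G$, takes an arbitrary $u_{2m-1}$--$u_{2m}$ path $P$, and performs surgery at the first and last vertices where $P$ meets $\bigcup_i P^i$, with several subcases depending on which $P^i$ are hit and how. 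Once corrected as above, your route is shorter and more conceptual than the paper's case analysis.
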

\begin{proof}
Since $G$ is connected, the result is trivial when $m=1$.

\delete{Suppose $m=2$. Since $G$ is a cubic graph, there exists a path $P$ between $u_1$ and $u_2$. If there exists a path $P'$ between $u_3$ and $u_4$ such that $P$ and $P'$ are edge-disjoint, then let $P_1=P, P_2=P'$, we are done.
Thus each $P$ and $P'$ have common edges, so have common vertices. Let $x$ (and $y$) be the first (and the last) common vertex when traveling $P'$ from $u_3$ to $u_4$.
Without loss of generality, we assume that the length of $P[u_1, x]$ is less than the length of $P[u_1, y]$. In this case, let $P_1$ be the concatenation of path $P[u_1, x]$ and $P'[x, u_3]$; $P_2$ be the concatenation of path $P[u_2, y]$ and $P'[y, u_4]$. Clearly, $P_1$ and $P_2$ are edge-disjoint paths with different end-vertices in $\{u_1, u_2, u_3, u_{4}\}$.}

Assume that the result is true when the even number of vertices is less than $2m$ for $m \ge 2$.
Now consider $2m$ distinct vertices $u_1, u_2, \ldots, u_{2m}$ of $G$. By induction hypothesis, $G$ contains $m-1$ edge-disjoint paths $P^1, \ldots, P^{m-1}$ such that the $2m-2$ end-vertices of them are pairwise distinct and form the set $\{u_1, u_2, \ldots, u_{2m-2}\}$.
Without loss of generality, we assume that $P^i$ is a path between $u_{2i-1}$ and $u_{2i}$ for $1\le i\le m-1$.
If there exists a path $P$ in $G$ between $u_{2m-1}$ and $u_{2m}$ such that $P$ and $P^i$ are edge-disjoint for each $1\le i\le m-1$, then $P^1, \ldots, P^{m-1}, P$ are desired edge-disjoint paths.
Thus we assume that each path of $G$ between $u_{2m-1}$ and $u_{2m}$ has common edges with some paths in $\{P^1, P^2, \ldots, P^{m-1}\}$, so has common vertices with such paths.
Choose a path of $G$ between $u_{2m-1}$ and $u_{2m}$, say $P$.
Let $x, y\in V(P)$, possibly with $x=y$, be the first and last common vertices with $\cup_{i=1}^{m-1}{P^i}$ when traveling along $P$ from $u_{2m-1}$ to $u_{2m}$, respectively.
If $x, y$ are vertices of the same $P^i$, say $x, y\in V(P^1)$, then without loss of generality, we assume that $|P^1[u_1, x]|\le |P^1[u_1, y]|$. In this case, let $P_0$ be the concatenation of $P^1[u_1, x]$ and $P[x, u_{2m-1}]$, and $P_1$ the concatenation of $P^1[u_2, y]$ and $P[y, u_{2m}]$. Let $P_{j}=P^j$ for $2\le j\le m-1$. Clearly, $P_0, P_1, \ldots, P_{m-1}$ are desired edge-disjoint paths.

Thus we assume that $x$ and $y$ are in different $P^i$ for $i\in \{1, 2, \ldots, m-1\}$.
Without loss of generality, we assume that $x\in V(P^1)$ and $y\in V(P^2)$. If there exists a subpath of $P$ connecting $P^1$ and $P^2$ with all internal vertices outside $\cup_{i=3}^{m-1}{P^i}$, then without loss of generality, we assume that $P[x_1, y_1]$ is such a subpath with $x_1\in V(P^1)$, $y_1\in V(P^2)$, $|P^1[u_1, x]|\le |P^1[u_1, x_1]|$ and $|P^2[u_{3}, y_1]|\le |P^2[u_{3}, y]|$. In this case, let $P_1$ be the concatenation of $P^1[u_1, x]$ and $P[x, u_{2m-1}]$, $P_2$ the concatenation of $P^2[u_{4}, y]$ and $P[y, u_{2m}]$, and $P_m$ the concatenation of $P^1[x_1, u_{2}]$, $P[x_1, y_1]$ and $P^2[u_{3}, y_1]$. Let $P_{j}=P^j$ for $3\le j\le m-1$. Then $P_1, P_2, \ldots, P_{m}$ are desired edge-disjoint paths.
Then there exists no subpath of $P$ connecting $P^1$ and $P^2$ with all internal vertices outside $\cup_{i=3}^{m-1}{P^i}$.
This means that any subpath of $P$ connecting $P^1$ and $P^2$ has common vertices with $\cup_{i=3}^{m-1}{P^i}$. Thus $P$ can be decomposed into the union of subpaths of $\{P^1, P^2, \ldots, P^{m-1}\}$ and subpaths  connecting any two paths in $\{P^1, P^2, \ldots, P^{m-1}\}$ each with all internal vertices outside $\cup_{i=1}^{m-1}{P^i}$.
Without loss of generality, we can relabel the index of paths in $\{P^3, P^4, \ldots, P^{m-1}\}$ such that
there exists a subpath of $P$ connecting $P^1$ and $P^3$, a subpath of $P$ connecting $P^t$ and $P^2$ (where $t$ is an integer in $\{3, \ldots, m-2\}$), a subpath of $P$ conneting $P^j$ and $P^{j+1}$ (for each $3\le j\le t-1$)  each with all internal vertices outside $\cup_{i=1}^{m-1}{P^i}$  when traveling along $P$ from $u_{2m-1}$ to $u_{2m}$.
Let $P[x_j, y_j]$ be such a subpath of $P$ connecting $P^j$ and $P^{j+1}$ such that $x_j\in V(P^j)$ and $y_j\in V(P^{j+1})$ for each $3\le j\le t-1$, $P[x_1, y_1]$ be such a subpath of $P$ connecting $P^1$ and $P^{3}$ such that $x_1\in V(P^1)$ and $y_1\in V(P^{3})$, $P[x_2, y_2]$ be such a subpath of $P$ connecting $P^t$ and $P^{2}$ such that $x_2\in V(P^t)$ and $y_2\in V(P^{2})$.
Without loss of generality, we assume that $|P^j[u_{2j-1}, x_j]|\le |P^j[u_{2j-1}, y_{j-1}]|$ for each $4\le j\le t-1$, $|P^1[u_{1}, x]|\le |P^1[u_{1}, x_{1}]|$, $|P^2[u_{3}, y_{2}]|\le |P^2[u_{3}, y]|$ and $|P^3[u_{5}, x_3]|\le |P^3[u_{5}, y_{1}]|$ . In this case, let $P_0$ be the concatenation of $P^1[u_1, x]$ and $P[x, u_{2m-1}]$;  $P_1$ the concatenation of $P^{1}[u_{2}, x_{1}]$, $P[x_{1}, y_{1}]$ and $P^{3}[u_{6}, y_{1}]$; $P_2$ the concatenation of $P^{2}[u_{4}, y]$ and $P[y, u_{2m}]$; $P_{t}$ the concatenation of $P^{t}[u_{2t-1}, x_{2}]$, $P[x_{2}, y_{2}]$ and $P^{2}[u_{3}, y_{2}]$;
$P_j$ the concatenation of $P^{j}[u_{2j-1}, x_{j}]$, $P[x_{j}, y_{j}]$ and $P^{j+1}[y_{j}, u_{2j+2}]$ for $3\le j\le t-1$.
Let $P_{j}=P^j$ for $t+1\le j\le m-1$.
Then $P_0, P_1, \ldots, P_{m-1}, P_m$ are desired edge-disjoint paths.\qed
\end{proof}

For disjoint subsets $X, Y$ of $V(G)$, denote by $E_G(X, Y)$ the set of edges of $G$ with one vertex in $X$ and the other in $Y$.
If $H$ and $K$ are vertex-disjoint subgraphs of $G$, then we use $E_G(H, K)$ in place of $E_G(V(H), V(K))$.
Given a subgraph $H$ of $G$, we use $G/H$ to denote the graph obtained from $G$ by contracting all edges in $H$ and deleting the resultant loops.

\begin{defn}
\label{def:c-c tree}
{\em
A graph $G$ is called a \emph{cycle-cubic tree} relative to a family $\RRR = \{R_1, R_2, \ldots, R_t\}$, the size of this family $\RRR$ is $t$, of pairwise vertex-disjoint connected graphs $R_1, R_2, \ldots, R_t$ of $G$ if the following conditions are satisfied:
\begin{enumerate}[{(a)}]

\item $R_i$ is a cycle or a cubic graph and $V(R_1)\cup \ldots \cup V(R_t)=V(G)$;

\item the graph $G_{\RRR}$ ($=G/(R_1\cup R_2\cup \ldots \cup R_t)$) obtained from $G$ by identifying all vertices in each $R_i$ to form a single vertex $v_{R_i}$, $1\le i\le t$, is a tree;

\item the degree of $v_{R_i}$ in $G_{\RRR}$ is even (respectively, odd) if and only if $R_i$ is an even cycle or a cubic graph (respectively, an odd cycle).

\end{enumerate}

}
\end{defn}

By Definition~\ref{def:c-c tree}, $d_G(v)\ge 2$ for each vertex $v\in V(G)$, and if $t=1$, then $G$ is an even cycle or a cubic graph.
Next we prove the existence of a special function for cycle-cubic trees which is crucial to the study of zero-sum 6-flows in 5-regular graphs.

\begin{lemma}
\label{c-c tree}
Every cycle-cubic tree $G$ relative to a family $\RRR = \{R_1, R_2, \ldots, R_{t}\}$ with maximum degree at most 5 admits a function $f: E(G) \rightarrow Z$ such that
\be
\label{eq:fe}
f(e)=\left\{
\begin{array}{ll}
\text{$4$ or $5$}, &\text{if $e\in E(R_i)$ and $R_i$ is an edge of a cycle},\\ [0.1cm]
\text{$1, 2$ or $3$}, &\text{if $e\in E(R_i)$ and $R_i$ is an edge of a cubic graph},\\ [0.1cm]
\text{$-2$ or $-4$}, &\text{if $e$ is an edge outside $\cup_{i=1}^{t}R_i$}.
\end{array}
\right.
\ee
and
for each vertex $u\in V(G)$,

\be
\label{eq:ff}
\Sigma_{v\in N_G(u)}f(uv) = \left\{
\begin{array}{ll}
0, &\text{if $d_G(u)=5$},\\ [0.1cm]
3, &\text{if $d_G(u)=4$},\\ [0.1cm]
6, &\text{if $d_G(u)=3$},\\ [0.1cm]
9, &\text{if $d_G(u)=2$}.
\end{array}
\right.
\ee

\end{lemma}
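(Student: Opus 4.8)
The plan is to induct on the number $t$ of blocks in the family $\mathcal{R}$, exploiting the tree structure of $G_{\mathcal R}$. First I would record what the required sums in~\eqref{eq:ff} force locally. Writing the target at $u$ uniformly as $3(5-d_G(u))$, and noting that every edge outside $\cup_i R_i$ carries an even value from $\{-2,-4\}$, a parity count shows that the number of odd-valued within-block edges at $u$ must be congruent to $d_G(u)+1 \pmod 2$. Translating to the two block types: at a vertex of a cycle the two incident cycle-edges must be equal (both $4$ or both $5$) exactly when $d_G(u)$ is odd, and must differ (one $4$, one $5$) exactly when $d_G(u)$ is even; at a vertex of a cubic graph the within-block contribution is forced into a short list of admissible values. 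I would then reduce each block to a self-contained labelling problem coupled to its incident tree edges.

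For a cycle block I would choose the $\{4,5\}$-pattern through its parity bits around the cycle, switching bit exactly at the even-degree vertices. Such a cyclic $0/1$ sequence exists iff the number of these transition vertices is even. This is precisely what condition (c) of Definition~\ref{def:c-c tree} delivers: (c) forces $\deg_{G_{\mathcal R}}(v_{R_i})\equiv \ell_i \pmod 2$ for the cycle of length $\ell_i$, while $\deg_{G_{\mathcal R}}(v_{R_i})\equiv \ell_i - m \pmod 2$ with $m$ the number of transition vertices, so $m$ is even. Once the pattern is fixed, the multiset of values $\{-2,-4\}$ needed on the tree edges at each vertex is determined, with freedom only in how these are distributed among several tree edges at the higher-degree vertices.

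For a cubic block I would start from the all-$2$ labelling, which makes every within-block sum equal to $6$, and then correct it. Setting the tree edges at each degree-$5$ vertex so that exactly one is $-4$ keeps its required within-block sum at $6$, and degree-$3$ vertices already need $6$; only the degree-$4$ vertices need their sum shifted by $\pm1$, which I realize by changing some edges from $2$ to $1$ or $3$. Concretely, I pair up the degree-$4$ vertices and invoke Lemma~\ref{m path} to obtain edge-disjoint paths joining the pairs; assigning alternating $\pm1$ corrections along each path shifts exactly its two endpoints (the two corrections at each internal vertex cancel), and since distinct paths are edge-disjoint no edge receives more than one correction, so every corrected edge stays in $\{1,2,3\}$. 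The pairing is possible because the number of degree-$4$ vertices in a cubic block is even — again from condition (c), since $\deg_{G_{\mathcal R}}(v_{R_i})=\#\{\deg 4\}+2\,\#\{\deg 5\}$ is even.

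The remaining and hardest step is the gluing: each tree edge is shared by two blocks, so its single value in $\{-2,-4\}$ must meet the demands of both endpoints at once. Here I would induct on $t$. The base case $t=1$ is immediate (an even cycle takes alternating $4,5$; a cubic graph takes all $2$'s). For the step, every leaf of $G_{\mathcal R}$ has degree $1$, hence by condition (c) is an odd cycle meeting the rest of $G$ in one tree edge $e=uw$; the cycle analysis shows its pattern can be chosen to make $f(e)$ either $-2$ or $-4$ at will. Deleting this leaf lowers $d_G(w)$ by one, which both breaks condition (c) at the neighbouring block and shifts the required sum at $w$ by $\mp1$; crucially these two parity changes match, so the natural move is to strengthen the inductive hypothesis to permit a $\pm1$ shift of the target at the attachment vertex and to absorb it by the degree-$4$ correction mechanism above. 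The main obstacle is carrying this bookkeeping through consistently — ensuring that the forced distribution of $-4$'s never demands an inadmissible $\pm2$ correction at a degree-$5$ vertex and that all local parities mesh globally. Condition (c) is the linchpin throughout: it guarantees an even number of odd-cycle leaves (so $|V(G)|$ is even and the total of the targets is even), an even number of transition vertices in each cycle, and an even number of degree-$4$ vertices in each cubic block, which is exactly what makes every local parity requirement solvable.
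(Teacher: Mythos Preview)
Your local analysis is sound and matches the paper's core mechanisms: the parity forcing that determines the $\{4,5\}$-pattern on each cycle, and the all-$2$ base plus alternating $\pm 1$ corrections along the edge-disjoint paths of Lemma~\ref{m path} in each cubic block. Condition~(c) is indeed exactly what makes each block's internal problem solvable, and the base case $t=1$ is handled the same way.

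The genuine gap is the gluing, which you yourself flag as ``the main obstacle'' but do not resolve. Deleting a leaf odd cycle leaves a graph that is no longer a cycle-cubic tree (condition~(c) fails at the neighbouring block $R_2$), so the unaltered induction hypothesis does not apply; your proposed strengthening is never stated precisely, let alone established. One concrete failure point: your claim that at a degree-$5$ cubic vertex the two tree edges can be ``set'' to one $-2$ and one $-4$ is unjustified --- those values are dictated by the adjacent blocks, not freely choosable, and if both come out $-2$ (or both $-4$) you need a within-block correction of $\pm 2$, which your single-path mechanism cannot supply. The paper sidesteps all of this by never strengthening the hypothesis: instead it \emph{modifies the graph} so that the smaller instance is again a genuine cycle-cubic tree. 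If the attachment vertex $u_1$ lies in a cycle, it deletes $u_1$ as well and splices its two cycle-neighbours by a new edge (the cycle shortens by one, restoring~(c)); if $u_1$ lies in a cubic block, it replaces the entire block by a family of short cycles built from the Lemma~\ref{m path} paths and the parities of their lengths; and if $u_1$ carries two or more tree edges, it splits $G$ at the cut-vertex $u_1$, substituting a $4$-cycle on one side. Each reduction yields one or more smaller cycle-cubic trees to which the \emph{unchanged} statement applies, and the resulting labellings are patched back explicitly --- which is the missing idea in your sketch.
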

\begin{proof}
By Definition \ref{def:c-c tree} (b), $G$ is connected.
If $t=1$, then $G\cong R_1$ and $R_1$ is an even cycle or a cubic graph by Definition \ref{def:c-c tree} (c).
If $R_1$ is an even cycle, then define $f: E(G) \rightarrow \{4, 5\}$ such that $4, 5, \ldots, 4, 5$ occur alternatively on the cycle with a chosen orientation. Clearly, $f$ is a function satisfying (1) and (2).
If $R_1$ is a cubic graph, then define $f: E(G) \rightarrow \{2\}$. Clearly, $f$ also satisfies (1) and (2).

\delete{Suppose $t=2$, that is, $\RRR=\{R_1, R_2\}$.
In this case, $R_1$ and $R_2$ are odd cycles. Then $G$ is a graph that is consisted of $R_1$ and $R_2$ and an edge outside $E(\RRR)$ connecting this two bad cycles.
Without loss of generality, we assume that $u_1u_2\notin E(\RRR)$ with $u_i\in V(R_i)$ for each $i=1, 2$. We label edges of $R_i$ starting with an edge incident with $u_i$ by $4, 5, \ldots, 4, 5, 4$ alternatively in a direction and then label $u_1u_2$ with $-2$. Thus these labels constitute a desired mapping on $E(G)$ that meets (1) and (2).}

Assume that the result is true when the size of $\RRR$ less than $t$ for some $t\ge 2$.
Next consider a cycle-cubic tree $G$ relative to $\RRR=\{R_1, R_2, \ldots, R_t\}$. By Definition \ref{def:c-c tree}, $\RRR$ contains at least two odd cycles.
Since $G/{\RRR}$ is a tree, we may choose an odd cycle, say $R_1$, such that $d_{G_{\RRR}}(v_{R_1})=1$. We assume that the neighbor of $v_{R_1}$ in $G_{\RRR}$ is $v_{R_2}$ and $v_1u_1\in E_G(R_1, R_2)$, where $v_1\in V(R_1)$ and $u_1\in V(R_2)$. Since the maximum degree of $G$ is at most 5, $1 \le |E_{G}(u_1, V(G)-V(R_2))|\le 3$.

Case 1. $|E_{G}(u_1, V(G)-V(R_2))|= 1$.

Subcase 1.1. $R_2$ is a cycle.

In this case, $d_G(u_1)=3$ and $u_1$ has two neighbors in $R_2$, say $u_{11}$ and $u_{12}$. Deleting $V(R_1)\cup \{u_1\}$, and then adding the edge $u_{11}u_{12}$ from $G$, we obtain a cycle-cubic tree $G'$ relative to $\RRR'=\{R_2', R_3 \ldots, R_t\}$, where $R_2'$ is a cycle with $|R_2'|=|R_2|-1$.
By the induction hypothesis, $G'$ has a function $f'$ satisfying (1) and (2).

Let $f'(u_{11}u_{12})=a$. By (1), $a\in \{4, 5\}$. Define $f(u_1u_{11})=f(u_1u_{12})=a$ and $f(u_1v_1)=6-2a\in \{-2, -4\}$. Then $f(v_1u_1)+f(u_1u_{11})+f(u_1u_{12})=a+a+(6-2a)=6$, $u_1$ satisfies (2).
If $a=4$, then label the edges of $R_1$ with $4$ and $5$ alternatively in a chosen direction starting with an edge incident with $v_1$.
If $a=5$, then label the edges of $R_1$ with $5$ and $4$ alternatively in a chosen direction starting with an edge incident with $v_1$.
In either case, let $f(e)=f'(e)$ for all other edges of $G$. We can deduce that $f$ is a function of $G$ satisfying (1) and (2).

Subcase 1.2. $R_2$ is a cubic graph.

By Definition \ref{def:c-c tree} (c), $|E_{G}(R_2, G-V(R_2))|$ is even. Since the maximum degree of $G$ is at most 5, $0 \le |E_{G}(u, V(G)-V(R_2))|\le 2$ for each vertex $u\in V(R_2)$. Then the number of vertex $u$ of $R_2$ with $|E_{G}(u, V(G)-V(R_2))|=1$ is even. Without loss of generality, we may assume that $u_1, u_2, \ldots, u_{2m}$ are all vertices with $|E_{G}(u_i, V(G)-V(R_2))|=1$ for $1\le i\le 2m$ and if there exists vertex $u$ such that $|E_{G}(u, V(G)-V(R_2))|=2$, then let $u_{2m+1}, \ldots, u_{2m+k}$ are all vertices with $|E_{G}(u_{2m+i}, V(G)-V(R_2))|=2$ for $1\le i\le k$.
By Lemma \ref{m path}, we can find $m$ edge-disjoint paths in $R_2$ such that the $2m$ end-vertices of them are pairwise distinct and constitute the set $\{u_1, u_2, \ldots, u_{2m}\}$. Without loss of generality, we may assume that $P_{1}$, $P_{2}$, $\ldots$, $P_{m}$ are such paths in $R_2$, where $P_{i}$ connects $u_{2i-1}$ and $u_{2i}$. Let $P_{k+i}: u_{2m+i}$ be the trivial path in $R_2$ for $1\le i\le k$. Then $P_1, \ldots, P_{m+k}$ are edge-disjoint paths in $R_2$.
Next we construct a new graph $G'$ from $G$ by the following steps:

\emph{Step 1.} Delete all vertices of $R_2$ except vertices $\{u_{1}, \ldots, u_{2m}, u_{2m+1}, \ldots, u_{2m+k}\}$ and delete all edges in $R_2$;

\emph{Step 2.}  Add two parallel edges $u_{2i-1}u_{2i}$ if $|P_i|$ is odd; add two 2-paths $u_{2i-1}x_iu_{2i}$ and $u_{2i-1}y_iu_{2i}$ if $|P_i|$ is even for $1\le i\le m$;

\emph{Step 3.} Split each $u_{2m+i}$ into two vertices $w_{2m+i}$ and $z_{2m+i}$ for $1\le i\le k$ such that one edge of $E_G(u_{2m+i}, V(G)-V(R_2)))$ is incident with $w_{2m+i}$, the other edge of $E_G(u_{2m+i}, V(G)-V(R_2)))$ is incident with $z_{2m+i}$;

\emph{Step 4.}  Add two 2-paths $w_{2m+i} x^{2m+i} z_{2m+i}$ and $w_{2m+i} y^{2m+i} z_{2m+i}$ for $1\le i\le k$.

By Steps 1-4, we get a graph $G'$ which is a union of cycle-cubic trees each with smaller number of cycles and cubic graphs. By the induction hypothesis, there is a function $f'$ of $G'$ satisfying (1) and (2).
Next we observe values under $f'$ of edges which are corresponding edges of $E_{G}(R_2, G-V(R_2))$.

Without loss of generality, we assume that $u_iv_i\in E_{G}(R_2, G-V(R_2))$ for $1\le i\le 2m$ and $u_{2m+j}v_{2m+j}, u_{2m+j}v_{2m+j}^*\in E_{G}(R_2, G-V(R_2))$ for $1\le j\le k$. Without loss of generality, we assume that $w_{2m+j}v_{2m+j}$ and $z_{2m+j}v_{2m+j}^*$ are edges of $G'$ for each $1\le j\le k$. By (1), $f'(u_iv_i), f'(w_jv_{2m+j})$ and $f'(z_jv_{2m+j}^*) \in \{-4, -2\}$ for $1\le i\le 2m$, $1\le j\le k$. If $|P_i|$ is odd, then $P_i$ must be path in $\{P_1, P_2, \ldots, P_m\}$ and the two parallel edges $u_{2i-1}u_{2i}$ with equal values $a$ under $f'$, where $a\in \{4, 5\}$ by (1), (2). Then in this case $f'(u_{2i-1}v_{2i-1})=f'(u_{2i}v_{2i})$. If $|P_i|$ is even, then when $i\in \{1, 2, \ldots, m\}$, $f'(u_{2i-1}x_i)=f'(u_{2i-1}y_i)=a$ and $f'(u_{2i}x_i)=f'(u_{2i}y_i)=b$, and when $i\in \{2m+1, \ldots, 2m+k\}$, $f'(w_{i}x^i)=f'(w_{i}y^i)=a$ and $f'(z_{i}x^i)=f'(z_{i}y^i)=b$, where $\{a, b\}=\{4, 5\}$ by (1), (2). Then in this case, $f'(u_{2i-1}v_{2i-1})\neq f'(u_{2i}v_{2i})$ for $1\le i\le m$ and $f'(w_{i}v_{i})\neq f'(z_iv_{i}^*)$ for $2m+1\le i\le 2m+k$.

Next we define a function of $G$ based on $f'$. If $f'(u_{2i-1}v_{2i-1})=-2$ (or $-4$) for each $1\le i\le m$, then we label $E(P_{i})$ with $1, 3, 1, 3, \ldots$ (or $3, 1, 3, 1,\ldots$) alternatively
starting with the edge incident with $u_{2i-1}$.
Then let  $f(e)=2$ for each $e\in E(R_2)-\cup_{i=1}^{m}E(P_i)$, $f(u_{2m+i}v_{2m+i})=f'(w_{2m+i}v_{2m+i})$, $f(u_{2m+i}v_{2m+i}^*)=f'(z_{2m+i}v_{2m+i}^*)$ and $f(e)=f'(e)$ for all other edges $e\in E(G)-E(R_2)$.
Then we can deduce that $f$ is a desired function of $G$ satisfying (1) and (2) since $P_1, P_2, \ldots, P_{m}$ are pairwise edge-disjoint paths.

Case 2. $|E_{G}(u_1, V(G)-V(R_2))|\ge 2$.

Without loss of generality, we may assume that $u_1v_1, u_1v_2\in E_{G}(u_1, V(G)-V(R_2))$ with $v_1\in V(R_1)$. Then $v_2$ is a vertex of $R_i$ for some $i\neq 1, 2$.
We know that $u_1$ is a vertex-cut of $G$. Let $G_1$ and $G_2$ be edge-disjoint subgraphs of $G$ such that $G_1\cup G_2=G$ and $V(G_1)\cap V(G_2)=\{u_1\}$, $u_1v_1, u_1v_2\in E(G_1)$ and the other edges incident with $u_1$ are in $G_2$.
Constructing a graph $G_1'$ from $G_1$ by replacing $u_1$ with a 4-cycle $xyzwx$ and replacing $u_1v_1, u_1v_2$ by $v_1x, v_2z$, respectively.
Clearly, $G_1'$ and $G_2$ are cycle-cubic tree each relative to a smaller number of cycles and cubic graphs.
By the induction hypothesis, $G_1'$ admits a desired function $f_1$ on $E(G_1')$ and $G_2$ admits a desired function $f_2$ on $E(G_2)$, both satisfying (1) and (2).
By the similar discussion in Subcase 1.2, $f_1(v_1x)\neq f_1(v_2z)\in \{-2, -4\}$.
Without loss of generality, we may assume that $f_1(v_1x)=-2$ and $f_1(v_2z)=-4$ by (1) and (2). In this case, we define $f$ on $G$ such that $f(u_1v_1)=f_1(v_1x)$, $f(u_1v_2)=f_1(v_2z)$, and for all other edges $e$ of $G$, let $f(e)=f_i(e)$ if $e\in E(G_i)$ for $i=1, 2$. Clearly, $f$ satisfies (1). We only need to prove that for vertex $u_1$, (2) satisfies. Obviously, $d_{G}(u_1)=d_{G_2}(u_1)+2$ and $\Sigma_{v\in N_{G}(u_1)}f(u_1v)=\Sigma_{v\in N_{G_2}(u_1)}f_2(u_1v)+(-2-4)=\Sigma_{v\in N_{G_2}(u_1)}f_2(u_1v)-6$. Then when $d_{G_2}(u_1)=2$ (or $3$), then $d_{G}(u_1)=4$ (or $5$) and $\Sigma_{v\in N_{G}(u_1)}f(u_1v)=9-6=3$ (or $=6-6=0$), which satisfies (2). Thus $f$ is a desired function on $G$ satisfying (1) and (2).
\qed

\end{proof}

\section{Proof of Lemma \ref{th1}}
\label{sec:pf}

Let $G$ be a graph and $T$ a subset of $V(G)$ with even cardinality. A set $J \subseteq E(G)$ is called a \emph{$T$-join} \cite{Korte} in $G$ if for any $v\in V(G)$, $v$ is incident with an odd number of edges in $J$ if and only if $v\in T$. We also call the subgraph induced by $J$ a $T$-join.

\begin{lemma}
\label{T-join}
(\cite [Proposition 12.6] {Korte})
Let $G$ be a graph and $T\subseteq V(G)$ with $|T|$ even. There exists a $T$-join in $G$
if and only if $|V(Q)\cap T|$ is even for each connected component $Q$ of $G$.
\end{lemma}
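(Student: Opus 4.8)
The plan is to prove the two directions separately, handling the graph one connected component at a time; since every edge has both endpoints in the same component, a $T$-join of $G$ is nothing but the disjoint union of $T_Q$-joins over the components $Q$, where $T_Q = V(Q)\cap T$.

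For necessity, suppose $J$ is a $T$-join and let $Q$ be a connected component. The edges of $J$ lying in $Q$ form a subgraph $H=(V(Q), J\cap E(Q))$ whose odd-degree vertices are exactly $V(Q)\cap T$, directly from the definition of a $T$-join. The degree sum $\sum_{v\in V(Q)} d_H(v)=2|E(H)|$ is even, so by the handshake lemma the number of odd-degree vertices of $H$ is even; that is, $|V(Q)\cap T|$ is even. This is the easy half.

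For sufficiency I would argue constructively inside each component and then take the union. Fix a component $Q$ and write $V(Q)\cap T=\{t_1,\ldots,t_{2s}\}$, where the count is even by hypothesis. Since $Q$ is connected, for each $i$ choose a path $P_i$ in $Q$ joining $t_{2i-1}$ to $t_{2i}$, and set $J_Q=E(P_1)\,\triangle\,\cdots\,\triangle\,E(P_s)$ (symmetric difference of edge sets). To verify the degree condition cleanly, pass to the edge space over $\FF_2$: the boundary map $\partial$ sending an edge set to its vector of vertex-degree parities is $\FF_2$-linear, and $\partial(E(P_i))$ is precisely the indicator $\mathbf{1}_{\{t_{2i-1},t_{2i}\}}$ of the endpoint pair of $P_i$. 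Hence $\partial(J_Q)=\sum_{i=1}^{s}\mathbf{1}_{\{t_{2i-1},t_{2i}\}}=\mathbf{1}_{V(Q)\cap T}$, since each $t_j$ occurs in exactly one pair. Thus the odd-degree vertices of $J_Q$ are exactly $V(Q)\cap T$, and $J=\bigcup_Q J_Q$ over all components is the desired $T$-join of $G$, the contributions from distinct components being edge-disjoint and affecting disjoint vertex sets.

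The only genuine subtlety — and the step I would be most careful about — is the sufficiency argument, specifically the claim that forming symmetric differences of the connecting paths flips the degree parity at exactly the two endpoints of each path and nowhere else, irrespective of how the paths overlap. Working in $\FF_2^{E(Q)}$ and invoking the linearity of $\partial$ dispatches this with no case analysis on overlaps; the alternative is an induction on $s$, adding one path at a time and checking that each addition toggles parity only at its two new endpoints, but the linear-algebra formulation is shorter and far less error-prone.
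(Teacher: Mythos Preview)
Your proof is correct. Note that the paper does not actually prove this lemma; it is quoted directly from Korte and Vygen (Proposition~12.6) without proof, and your argument---handshake lemma for necessity, symmetric difference of pairing paths with the $\FF_2$-boundary computation for sufficiency---is essentially the standard textbook proof found there.
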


\smallskip

\begin{proof}\textbf{of Theorem~\ref{th1}}~
Let $G$ be a 5-regular graph. Without loss of generality, we may assume that $G$ is connected. If $G$ contains a 2-factor, say $G_1$,  then $G_2=G-E(G_1)$ is a 3-factor of $G$. Define $f(e)=3$ for each $e\in E(G_1)$ and $f(e)=-2$ for each $e\in E(G_2)$. Thus $f$ is a zero-sum 4-flow of $G$, so is a zero-sum 6-flow of $G$.

Now we assume that $G$ does not contain a 2-factor or a 3-factor.
By Lemma \ref{factor}, $G$ has a $[2, 3]$-factor each component of which is regular. We may assume that $H=H_1\cup H_2$ is the $[2, 3]$-factor of $G$ such that $H_1$ is 2-regular and $H_2$ is 3-regular.
Clearly, $H_1$ is a union of cycles.
If $H_1$ contains no odd cycles, then we can label the edges of $H_1$ by $4$ and $5$, alternatively. This labeling defines a function $f_1$ on $E(H_1)$. Then for every $u\in V(H_1)$, $\Sigma_{v\in N_{H_1}(u)}f_1(uv)=9$.
Define $f_2(e)=2$ for each $e\in E(H_2)$. Then $f_2$ is a function on $E(H_2)$ such that for every $u\in V(H_2)$, $\Sigma_{v\in N_{H_2}(u)}f_2(uv)=6$.
Define $f(e)=f_1(e)$ for each $e\in E(H_1)$; $f(e)=f_2(e)$ for each $e\in E(H_2)$, and $f(e)=-3$ for each $e\in E(G)\backslash E(H)$. Then $f$ is a zero-sum 6-flow of $G$.

It remains to consider the case where $H_1$ contains odd cycles. Since $G$ is 5-regular and $H_2$ is 3-regular, $|V(G)|$ and $|V(H_2)|$ are even. Thus the number of odd cycles of $H_1$ is even.
Let $\{C_1, C_2, \ldots, C_{2m}\}$ be the family of odd cycles in $H_1$, where $m\ge 1$.
Since $G$ is connected, so is $G/H$. Since $H$ is a spanning subgraph of $G$, each vertex of $G/H$ is a contracted vertex.
Let $T=\{v_{C_1}, v_{C_2}, \ldots, v_{C_{2m}}\}$. By Lemma \ref{T-join}, there exists a $T$-join in $G/H$ such that each vertex in $T$ is odd.
Deleting recursively all edges on a cycle when necessary, we obtain a $T$-join in $G/H$ that has no cycle. Replacing each vertex $v$ in such $T$-join by the corresponding contracted connected component of $H$, we obtain a union of  cycle-cubic trees of $G$, say $M$, relative to some connected components of $H$. We know that $M$ contains all $C_1, C_2, \ldots, C_{2m}$.
If $M$ does not contain all connected components of $H$, then adding the connected components of $H$ to $M$ until all connected components of $H$ are contained in $M$. This new graph is also a union of cycle-cubic trees, denoted by $M$. By Lemma \ref{c-c tree}, each connected component of $M$ has a function satisfying (1) and (2). The union of these functions, denoted by $f'$, also satisfies (1) and (2).
For each edge outside $M$, define $f(e)=-3$ and let $f(e)=f'(e)$ if $e\in M$. One can easily verify that $f$ is a zero-sum 6-flow of $G$.\qed
\end{proof}

\section{Acknowledgements}

Author appreciates Sanming Zhou for his valuable comments and advisements. Moreover, author acknowledges the hospitality of School of Mathematics and Statistics, The University of Melbourne during her visit while the research on this work was conducted.

\small

\end{document}